\theoremstyle{plain}
\newtheorem{Theorem}{Theorem}
\newtheorem{Lemma}{Lemma}
\newtheorem{Proposition}[Lemma]{Proposition}
\newtheorem{Corollary}[Lemma]{Corollary}
\theoremstyle{definition}
\numberwithin{equation}{section}
\DeclareMathAlphabet\mathbb{U}{msb}{m}{n}
\begin{document}

\begin{center}
{\Large \bf Higher Jacobi identities  \\} 
\ \\ I. Alekseev and S.O. Ivanov  
\end{center}

\

\begin{center}
\begin{minipage}{300pt}  {\sc Abstract}. \footnotesize
By definition the identities $[x_1,x_2]+[x_2,x_1]=0$ and $[x_1,x_2,x_3]+[x_2,x_3,x_1]+[x_3,x_1,x_2]=0$ hold in any Lie algebra. It is easy to check that the identity $[x_1,x_2,x_3,x_4]+[x_2,x_1,x_4,x_3]+[x_3,x_4,x_1,x_2]+[x_4,x_3,x_2,x_1] = 0$ holds in any Lie algebra as well. We investigate sets of permutations that give identities of this kind. In particular, we construct a family of such subsets $T_{k,l,n}$ of the symmetric group $S_n,$ and hence, a family of identities that hold in any Lie algebra. 
\end{minipage}
\end{center}

\

\section*{Introduction}

By a Lie ring we mean a Lie algebra over $\mathbb Z.$ Any Lie algebra can be considered as a Lie ring. By definition the identities $[x_1,x_2]+[x_2,x_1]=0$ and $[x_1,x_2,x_3]+[x_2,x_3,x_1]+[x_3,x_1,x_2]=0$ hold in any Lie ring, where $[x_1,\dots,x_n]$ denotes the left-normed bracket. 
Moreover, it is easy to check that there is one more identity that holds in any Lie ring: $[x_1,x_2,x_3,x_4]+[x_2,x_1,x_4,x_3]+[x_4,x_3,x_2,x_1]+[x_3,x_4,x_1,x_2]=0.$ This motivates the following definition. 
A subset $T$ of the symmetric group $S_n$ is said to be {\it Jacobi} if the following identity holds in any Lie ring 
\begin{equation}\label{id_T}
\sum_{\sigma\in T} [x_{\sigma(1)},\dots,x_{\sigma(n)}]=0.
\end{equation} 
The paper is devoted to investigation of Jacobi  subsets. 
A family of Jacobi subsets $T_{k,l,n}\subseteq  S_n,$ where $k+l\leq n,$ was constructed (Theorem \ref{Theorem1}). Moreover, a `vector space of all relations between left-normed brackets' was constructed and a basis of the space induced by the subsets $T_{k,1,n}$ was found (Theorem \ref{Theorem2}). The most interesting identities come from the Jacobi subsets $T_{k,l}:=T_{k,l,k+l}.$ Here we give some examples of identities that hold in any Lie ring and come from the sets $T_{k,l}:$
\begin{itemize}
\item $T_{1,1}: \hspace{0.5cm} [x_1,x_2]+[x_2,x_1] = 0;$
\item $T_{1,2}: \hspace{0.5cm} [x_1,x_2,x_3]+[x_2,x_3,x_1]+[x_3,x_1,x_2] = 0;$
\item $T_{2,2}:\hspace{0.5cm} [x_1,x_2,x_3,x_4]+[x_2,x_1,x_4,x_3]+[x_3,x_4,x_1,x_2]+[x_4,x_3,x_2,x_1] = 0;$
\item $T_{1,3}: \hspace{0.5cm}[x_1,x_2,x_3,x_4]+[x_3,x_1,x_2,x_4]+[x_4,x_1,x_2,x_3]+[x_1,x_4,x_3,x_2]+[x_2,x_3,x_4,x_1] = 0;$
\item 
$T_{2,3}: \hspace{0.5cm}$ 
\begin{minipage}{11cm}
$[x_1,x_2,x_3,x_4,x_5]+[x_2,x_1,x_4,x_3,x_5]+[x_2,x_1,x_5,x_3,x_4]+$ \\
$[x_1,x_2,x_5,x_4,x_3]+[x_3,x_4,x_5,x_1,x_2]+[x_4,x_3,x_5,x_2,x_1] = 0;$
\end{minipage}

\item
$T_{3,3}:\hspace{0.5cm}$ 
\begin{minipage}{13cm}$
[x_1,x_2,x_3,x_4,x_5,x_6]+[x_2,x_1,x_3,x_5,x_4,x_6]+[x_2,x_1,x_3,x_6,x_4,x_5]+$\\
$[x_1,x_2,x_3,x_6,x_5,x_4]+
[x_4,x_5,x_6,x_1,x_2,x_3]+[x_5,x_4,x_6,x_2,x_1,x_3]+$ \\ 
$[x_5,x_4,x_6,x_3,x_1,x_2]+[x_4,x_5,x_6,x_3,x_2,x_1] = 0.$
\end{minipage}
\end{itemize} 
There are several simple operations over Jacobi subsets that allow to obtain new Jacobi subsets (Lemma \ref{Lemma_properties_of_Jacobi}). Using this we can obtain a big amount of Jacobi subsets from the sets $T_{k,l,n}$ but not all of them. For example, the following identity 
$$[x_1,x_2,x_3,x_4]+[x_3,x_1,x_2,x_4]+[x_4,x_1,x_2,x_3]+[x_1,x_4,x_3,x_2]+[x_4,x_3,x_2,x_1]+[x_2,x_4,x_3,x_1] = 0$$
holds in any Lie ring but the corresponding subset of $S_4$ can not be obtained from the sets $T_{k,l,4}$ using these operations. 

Another object of our interest is the set of $2$-Jacobi subsets. A subset $T\subseteq S_n$ is said to be $2$-Jacobi if the identity \eqref{id_T} holds in any Lie algebra over a field of characteristic $2$. Of course, any Jacobi subset is 2-Jacobi. But there are a lot of 2-Jacobi subsets which are not Jacobi subsets. For example, $\{(),(123), (13)\}.$ Moreover, it is easy to check that for $n=3$ the number Jacobi subsets is $10$ but the number of $2$-Jacobi subsets is $16.$ The advantage of 2-Jacobi subsets is that it is easier to investigate them. For example, we do not know the number of Jacobi subsets of $S_n$ but we know the number of 2-Jacobi subsets: $2^{(n-1)!\cdot (n-1)}$ (Corollary \ref{Corollary_number_2-Jacobi}).   
It was proved that, in contrast to the class of usual Jacobi subsets, the class of $2$-Jacobi 
subsets is closed under symmetric difference, which makes it a $\mathbb Z/2$-vector space. Moreover, it was proved that any $2$-Jacobi subset can be obtained as a symmetric difference of several Jacobi subsets.

The paper is organised as follows: in Section \ref{section_Results} we present all definitions, constructions and results without proofs; in Section \ref{section_Proofs} we give all proofs.

\section{Results}\label{section_Results}

The left normed Lie bracket of elements $a_1,\dots a_n$ of a Lie ring $L$ is defined by recursion  $[a_1,\dots a_n]:=[[a_1,\dots, a_{n-1}],a_n],$ where $[a_1]=a_1.$ 
By $S_n$ we denote the symmetric group on $\{1,\dots,n\}.$ If $n\leq m$ we denote by 
$$\iota_{n,m}:S_n\hookrightarrow S_m$$
the canonical embedding. 

\subsection{Jacobi subsets} 

 A subset $T$ of the symmetric group $S_n$ is said to be {\it Jacobi} if the following identity holds in any Lie ring 
\begin{equation}
\sum_{\sigma\in T} [a_{\sigma(1)},\dots,a_{\sigma(n)}]=0.
\end{equation} 
It is easy to see that $\{(),(1,2)\}$ and $\{ (),(1,2,3), (1,3,2) \}$ are Jacobi subsets of $S_n$ for any $n\geq 3.$ The following Lemma gives more examples of Jacobi subsets.

\begin{Lemma}\label{Lemma_properties_of_Jacobi} Let $T,T'$ be Jacobi subsets of $S_n$ and $H\subseteq G$ be subgroups of $S_n.$ Then the following holds. 
\begin{enumerate}
\item If  $T\cap T'=\emptyset,$ then $T\cup T'$ is Jacobi. 

\item  $\sigma T$ is Jacobi for any $\sigma\in S_n$.

\item $T (1,2)$ is Jacobi, where $(1,2)\in S_n$ is the transposition.

\item If $n\leq m$, then $\iota_{n,m}(T)$ is a Jacobi subset of $S_m$.

\item If $H$ is Jacobi, then $G$ is Jacobi. 
\item If $n\geq 2$ and $(1,2)\in G,$ then $G$ is Jacobi.
\item If $n\geq 3$ and $(1,2,3)\in G,$ then $G$ is Jacobi.  
\end{enumerate}
\end{Lemma}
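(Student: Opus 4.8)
The plan is to organize the seven claims by dependency rather than proving them in order, since several are immediate consequences of others. Items (1)--(4) are the atomic facts; (5)--(7) will be derived from them.

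For (1): if $\sum_{\sigma\in T}[a_{\sigma(1)},\dots,a_{\sigma(n)}]=0$ and $\sum_{\sigma\in T'}[a_{\sigma(1)},\dots,a_{\sigma(n)}]=0$ are identities in any Lie ring, then adding them gives the identity for $T\cup T'$, using disjointness so that no term is counted twice. For (2): given any $a_1,\dots,a_n$ in a Lie ring $L$, substitute $b_i := a_{\sigma^{-1}(i)}$ into the identity for $T$; then $[b_{\tau(1)},\dots,b_{\tau(n)}] = [a_{(\sigma^{-1}\tau)(1)},\dots]$, wait --- more carefully, one checks $[b_{\tau(1)},\dots,b_{\tau(n)}]=[a_{\sigma^{-1}\tau(1)},\dots,a_{\sigma^{-1}\tau(n)}]$, so summing over $\tau\in T$ reindexes to a sum over $\sigma^{-1} T$; replacing $\sigma$ by $\sigma^{-1}$ (or just noting we may choose the substitution accordingly) yields the identity for $\sigma T$. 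For (3): this is the key genuinely new input, and I expect it to be the main point of the lemma. The claim is that $[a_1,\dots,a_{n-1},a_n]$ can be rewritten, modulo swapping which of $a_{n-1},a_n$ comes last, in a way compatible with summing over $T$; concretely one wants to show $\sum_{\sigma\in T}[a_{\sigma(1)},\dots,a_{\sigma(n)}] = 0$ implies $\sum_{\sigma\in T}[a_{\sigma(1)},\dots,a_{\sigma(n-2)},a_{\sigma(n)},a_{\sigma(n-1)}]=0$. This should follow from antisymmetry of the outermost bracket: $[[a_1,\dots,a_{n-1}],a_n]+[[a_1,\dots,a_{n-1}],a_n]$... no --- rather, one uses the Jacobi identity $[[u,v],w]=[[u,w],v]+[u,[v,w]]$ with $u=[a_1,\dots,a_{n-2}]$, which expresses the swapped bracket in terms of the original plus a lower-degree correction; summing over a Jacobi set $T$ I would argue the correction terms also cancel (or one argues more directly by a substitution trick: apply (2) with the transposition $(n-1,n)$ and observe that left-normed brackets behave suitably --- this needs care since $(n-1,n)$ acting on arguments does \emph{not} literally swap the last two bracket slots because of nesting). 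I would therefore prove (3) by the free-Lie-ring argument: the identity holds in every Lie ring iff it holds in the free Lie ring on $x_1,\dots,x_n$, and there one can compute directly using the PBW/basis structure, or invoke the well-known fact that $[a_1,\dots,a_{n-1},a_n]-[a_1,\dots,a_n,a_{n-1}]$ equals a sum of left-normed brackets whose combinatorics respects $T$.

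For (4): an identity in the free Lie ring on $n$ generators, pushed forward along the inclusion of the free Lie ring on $n$ generators into that on $m$ generators (sending $x_i\mapsto x_i$), yields the corresponding identity; equivalently, just restrict a general $m$-variable substitution to the first $n$ variables. For (5): write $G$ as a disjoint union of right cosets $G=\bigsqcup_i H\sigma_i$; each $H\sigma_i$ is Jacobi by (2) applied to $H$ (coset $\sigma H$ vs $H\sigma$ --- use left cosets $\sigma_i H$ and item (2) directly, so write $G=\bigsqcup \sigma_i H$), and these are disjoint, so (1) gives that $G=\bigsqcup_i \sigma_i H$ is Jacobi. For (6): take $H=\{(),(1,2)\}$, which is Jacobi by the remark preceding the lemma, note $(1,2)\in G$ means $H\subseteq G$, and apply (5). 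For (7): take $H=\{(),(1,2,3),(1,3,2)\}$, Jacobi by the same remark, contained in $G$ since $(1,2,3)\in G$ forces $(1,3,2)=(1,2,3)^2\in G$, and apply (5).

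The main obstacle is item (3): items (1),(2),(4)--(7) are formal bookkeeping with substitutions and cosets, but (3) requires genuinely using the Jacobi identity to control what happens when the last two entries of a left-normed bracket are transposed, and making sure the lower-order correction terms assemble into something that vanishes because $T$ is Jacobi. I would handle this by reducing to the free Lie ring and exhibiting the difference $\sum_{\sigma\in T}\bigl([a_{\sigma(1)},\dots,a_{\sigma(n)}]-[a_{\sigma(1)},\dots,a_{\sigma(n)}\text{ with last two swapped}]\bigr)$ as an explicit $\mathbb Z$-linear combination of left-normed brackets indexed compatibly with $T$, then invoking the hypothesis.
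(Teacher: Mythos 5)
There is a genuine gap, and it is concentrated in item (3); the remaining items (1), (2), (4)--(7) are handled correctly and essentially as in the paper (disjoint union, substitution $a_i=b_{\sigma(i)}$, appending extra arguments by multilinearity, coset decomposition, and the two standard Jacobi subgroups $\{(),(1,2)\}$ and $\{(),(1,2,3),(1,3,2)\}$).

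For (3) you have misread what the set $T(1,2)$ is. An element of $T(1,2)$ is $\tau\circ(1,2)$ with $\tau\in T$, and since this is a \emph{precomposition}, it permutes the first two \emph{slots} of the bracket: $(\tau(1,2))(1)=\tau(2)$, $(\tau(1,2))(2)=\tau(1)$, and $(\tau(1,2))(j)=\tau(j)$ for $j\geq 3$. So the summand indexed by $\tau(1,2)$ is $[a_{\tau(2)},a_{\tau(1)},a_{\tau(3)},\dots,a_{\tau(n)}]$, not the bracket with the \emph{last} two entries exchanged, which is what you set out to prove ($\sum_{\sigma\in T}[a_{\sigma(1)},\dots,a_{\sigma(n-2)},a_{\sigma(n)},a_{\sigma(n-1)}]=0$). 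With the correct reading, (3) is a one-line computation and needs no Jacobi identity at all: the left-normed bracket is built by successively bracketing onto the innermost bracket $[a_{\tau(1)},a_{\tau(2)}]$, so swapping the first two entries changes the sign of each summand, whence $\sum_{\tau\in T(1,2)}[a_{\tau(1)},\dots,a_{\tau(n)}]=-\sum_{\tau\in T}[a_{\tau(1)},\dots,a_{\tau(n)}]=0$. (You briefly considered and discarded exactly this antisymmetry mechanism, attributing it to the ``outermost'' bracket; it is the innermost one that matters.)

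Beyond the misreading, the argument you sketch for your version of (3) is not a proof even of that statement: you appeal to the Jacobi identity $[[u,v],w]=[[u,w],v]+[u,[v,w]]$ and then assert that the lower-order ``correction terms'' should cancel after summing over $T$, or invoke an unproved ``well-known fact'' about rewriting $[a_1,\dots,a_{n-1},a_n]-[a_1,\dots,a_n,a_{n-1}]$; neither step is carried out, and it is not clear how the hypothesis that $T$ is Jacobi would make those corrections vanish. Whether $T\cdot(n-1,n)$ (in the slot-permuting sense) is always Jacobi is simply a different question from item (3), and your sketch does not settle it. A secondary, minor point: in (4) the clean argument is the paper's, namely $0=[\sum_{\tau\in T}[a_{\tau(1)},\dots,a_{\tau(n)}],a_{n+1},\dots,a_m]=\sum_{\tau\in\iota_{n,m}(T)}[a_{\tau(1)},\dots,a_{\tau(m)}]$ by linearity of the bracket in its first entry; your phrasing about restricting an $m$-variable substitution leaves this multilinearity step implicit.
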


Let $\mathbb Z\langle x_1,\dots, x_n \rangle$ be the free associative ring generated by elements $x_1,\dots,x_n.$ We denote by $\gamma_n$ the additive subgroup of  $\mathbb{Z}\langle x_1,\dots,x_n\rangle$ generated by the monomials  $x_{\sigma(1)}\dots x_{\sigma(n)}$ where $\sigma$ runs over $S_n.$ It is easy to see that $\gamma_n$ is a free abelian group of rank $n!$ and 
the monomials $x_{\sigma(1)}\dots x_{\sigma(n)}$ form its basis. Define a homomorphism $\beta_n:\gamma_n\to \gamma_n$ on the basis by the formula 
$$\beta_n(x_{\sigma(1)}\dots x_{\sigma(n)})=[x_{\sigma(1)},\dots ,x_{\sigma(n)}].$$
For a subset $T\subseteq S_n$ we set $${\sf Sum}(T)=\sum_{\sigma\in T} x_{\sigma(1)}\dots x_{\sigma(n)}\ \in \gamma_n.$$
\begin{Lemma}\label{Lemma_equivalent_Jacobi}
A subset $T\subseteq S_n$ is Jacobi if and only if ${\sf Sum}(T) \in {\rm Ker}(\beta_n).$
\end{Lemma}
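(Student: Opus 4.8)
The plan is to route both conditions through the free Lie ring on the generators $x_1,\dots,x_n$, which I will denote $L_n$. First I would reduce the statement ``the identity \eqref{id_T} holds in any Lie ring'' to the statement ``$\sum_{\sigma\in T}[x_{\sigma(1)},\dots,x_{\sigma(n)}]=0$ in $L_n$''. One implication is immediate: specialise $a_i:=x_i$ in $L_n$. For the converse, let $M$ be any Lie ring and $a_1,\dots,a_n\in M$; by the universal property of the free Lie ring there is a homomorphism $\varphi\colon L_n\to M$ with $\varphi(x_i)=a_i$, and applying $\varphi$ to the relation $\sum_{\sigma\in T}[x_{\sigma(1)},\dots,x_{\sigma(n)}]=0$ gives $\sum_{\sigma\in T}[a_{\sigma(1)},\dots,a_{\sigma(n)}]=0$. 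Hence $T$ is Jacobi if and only if $\sum_{\sigma\in T}[x_{\sigma(1)},\dots,x_{\sigma(n)}]=0$ holds in $L_n$.

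Next I would invoke the classical embedding of the free Lie ring into the free associative ring: the canonical map $L_n\to\mathbb Z\langle x_1,\dots,x_n\rangle$, sending $x_i\mapsto x_i$ and the Lie bracket to the commutator $[a,b]=ab-ba$, is injective, with image the Lie subring generated by $x_1,\dots,x_n$. Over $\mathbb Z$ this follows from the Poincaré--Birkhoff--Witt theorem applied to $L_n$ together with the identification of $\mathbb Z\langle x_1,\dots,x_n\rangle$ with the universal enveloping ring $U(L_n)$; one uses here that $L_n$ is free as an abelian group, so no assumption on the characteristic is needed. Under this embedding the element $\sum_{\sigma\in T}[x_{\sigma(1)},\dots,x_{\sigma(n)}]\in L_n$ is carried to the element of $\mathbb Z\langle x_1,\dots,x_n\rangle$ obtained by expanding each left-normed commutator. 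Since the expansion of $[x_{\sigma(1)},\dots,x_{\sigma(n)}]$ is a $\mathbb Z$-linear combination of the degree-$n$ monomials $x_{\tau(1)}\cdots x_{\tau(n)}$, it lies in $\gamma_n$ and coincides with $\beta_n(x_{\sigma(1)}\cdots x_{\sigma(n)})$ by the defining formula of $\beta_n$; summing over $\sigma\in T$ this image equals $\beta_n({\sf Sum}(T))$.

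Combining the two steps: $T$ is Jacobi $\iff \sum_{\sigma\in T}[x_{\sigma(1)},\dots,x_{\sigma(n)}]=0$ in $L_n \iff \beta_n({\sf Sum}(T))=0$ in $\mathbb Z\langle x_1,\dots,x_n\rangle$ (by injectivity of $L_n\hookrightarrow\mathbb Z\langle x_1,\dots,x_n\rangle$) $\iff {\sf Sum}(T)\in{\rm Ker}(\beta_n)$. The only point that requires genuine care is the injectivity of $L_n\to\mathbb Z\langle x_1,\dots,x_n\rangle$ over $\mathbb Z$, which is why I would appeal to PBW for a Lie ring whose underlying group is free abelian rather than to any characteristic-zero argument; everything else is a direct unwinding of the definitions of $\gamma_n$, $\beta_n$ and ${\sf Sum}$.
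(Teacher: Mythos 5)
Your proof is correct and follows essentially the same route as the paper: reduce the universal identity to the free Lie ring on $x_1,\dots,x_n$ via its universal property, then identify that free Lie ring with the Lie subring of $\mathbb Z\langle x_1,\dots,x_n\rangle$ generated by the $x_i$, so that vanishing of $\sum_{\sigma\in T}[x_{\sigma(1)},\dots,x_{\sigma(n)}]$ is exactly the condition ${\sf Sum}(T)\in{\rm Ker}(\beta_n)$. The only difference is how the key embedding is justified: the paper cites Reutenauer (Theorem 0.5) for the fact that this Lie subring of the free associative ring is free on the $x_i$, whereas you re-derive the injectivity from PBW over $\mathbb Z$ together with the (nontrivial, but classical) fact that the free Lie ring is free as an abelian group --- the same underlying fact, argued rather than cited.
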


\subsection{Shuffles and sets $T_{k,l,n}$}

An {\it $(s,t)$--shuffle} is a pair $(\alpha,\beta)$ such that $\alpha:\{1,\dots,s\} \to \{1,\dots, s+t\}$ and $\beta:\{1,\dots,t\} \to \{1,\dots, s+t\}$ are strictly monotonic functions with disjoint images. The set of all $(s,t)$--shuffles is denoted by ${\sf Sh}(s,t)$. The set of all $(s,t)$--shuffles such that $\alpha(1)=1$ is denoted by ${\sf Sh}^1(s,t).$

\begin{Proposition}\label{Proposition_bracket_shaffles} Let $L$ be a Lie ring and $a,a_1,\dots a_n \in L.$ Then
$$[a,[a_{1}, \ldots, a_{n}]] = \sum\limits_{i=0}^{n-1}\sum\limits_{(\alpha, \beta)}(-1)^i \left[a, a_{\beta(i)},\dots, a_{\beta(1)},a_{\alpha(1)}, \dots, a_{\alpha(n-i)}\right],$$ 
where the second sum is taken over all shuffles $(\alpha,\beta) \in {\sf Sh}^1(n-i,i)$.
\end{Proposition}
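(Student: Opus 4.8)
The plan is to argue by induction on $n$. For $n=1$ the claimed identity reads $[a,[a_1]]=[a,a_1]$, which is the base case. For the inductive step, write $[a,[a_1,\dots,a_n]] = [a,[[a_1,\dots,a_{n-1}],a_n]]$ and apply the Jacobi identity in the form $[a,[u,v]] = [[a,u],v] - [[a,v],u]$ with $u=[a_1,\dots,a_{n-1}]$ and $v=a_n$. This gives
$$[a,[a_1,\dots,a_n]] = [[a,[a_1,\dots,a_{n-1}]],a_n] - [[a,a_n],[a_1,\dots,a_{n-1}]].$$
The first term is handled by the inductive hypothesis for $n-1$: each summand $[a,a_{\beta(i)},\dots,a_{\beta(1)},a_{\alpha(1)},\dots,a_{\alpha(n-1-i)}]$ simply gets $a_n$ appended on the right, so it contributes to the terms of the desired sum for $n$ in which $a_n$ sits at the last position $\alpha(n-i)$ — that is, the shuffles with $\alpha(n-i)=n$, equivalently those where the index $n$ appears among the $\alpha$'s at the very end.

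For the second term $-[[a,a_n],[a_1,\dots,a_{n-1}]]$, I would again invoke the inductive hypothesis (for $n-1$), this time with the "head" element being $[a,a_n]$ rather than $a$. This expands $-[[a,a_n],[a_1,\dots,a_{n-1}]]$ as a signed sum of brackets of the shape $-[[a,a_n],a_{\beta(i)},\dots,a_{\beta(1)},a_{\alpha(1)},\dots,a_{\alpha(n-1-i)}]$ over $(\alpha,\beta)\in{\sf Sh}^1(n-1-i,i)$ on the index set $\{1,\dots,n-1\}$. Now I rewrite the leading $[[a,a_n],\dots]$ by noting that $[[a,a_n],a_{j_1},\dots] = [a,a_n,a_{j_1},\dots]$, so each such term is $-[a,a_n,a_{\beta(i)},\dots,a_{\beta(1)},a_{\alpha(1)},\dots]$; after relabelling (inserting $n$ at the front of the $\beta$-block) these are exactly the terms of the desired $n$-sum in which the index $n$ occurs in the $\beta$-block immediately after $a$, carrying the extra sign coming from the shift $i\mapsto i+1$, which matches the $(-1)^i$ versus $(-1)^{i+1}$ bookkeeping together with the overall minus sign.

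The main obstacle is purely combinatorial: verifying that the two families of terms produced above — those with $n$ at the tail of the $\alpha$-block and those with $n$ at the head of the $\beta$-block — together form a partition of the full index set $\bigcup_{i=0}^{n-1}{\sf Sh}^1(n-i,i)$ of shuffles on $\{1,\dots,n\}$, with signs and positions matching $(-1)^i$ exactly. Concretely one checks that a shuffle of $\{1,\dots,n\}$ into an $(n-i)$-block and an $i$-block (with the first element of the $(n-i)$-block equal to $1$, after the relabelling that places $a$ first) places the largest index $n$ either at the end of the long block or at the start of the short block, and these two cases correspond bijectively to the shuffles of $\{1,\dots,n-1\}$ appearing in the two terms, with the sign shift absorbed as described. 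Once this bijection is set up, the inductive step closes and the proposition follows.
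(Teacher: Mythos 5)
Your argument is correct, but it takes a genuinely different route from the paper. You induct directly inside the Lie ring: split off the last letter, apply the Jacobi identity in the form $[a,[u,v]]=[[a,u],v]-[[a,v],u]$, use the inductive hypothesis twice (once with head $a$ and $a_n$ appended, once with head $[a,a_n]$ --- legitimate because the statement is universally quantified in the head element), and then match terms via the observation that in a shuffle $(\alpha,\beta)\in{\sf Sh}^1(n-i,i)$ the largest index $n$ sits either at the end of the $\alpha$-block or at the top of the $\beta$-block (hence, in the written word, immediately after $a$), with the sign shift $(-1)^i\mapsto(-1)^{i+1}$ absorbing the minus sign from the Jacobi identity; this bookkeeping does check out, including the boundary case where the $\alpha$-block is just $\{1\}$. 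The paper instead first proves the purely associative identity \eqref{eq_sh_com_assotiative} expressing the left-normed commutator $[a_1,\dots,a_n]$ in any associative ring as a signed sum over ${\sf Sh}^1(n-i,i)$ (its inductive step uses exactly the same dichotomy on the position of the largest index that you need), and then transports it to $L$ via the anti-homomorphism ${\sf ad}':L\to({\sf End}(L),*)$, ${\sf ad}'(a)(b)=[b,a]$, applying the resulting operator identity to $a$. Your approach is more elementary and self-contained (no auxiliary associative ring, only the Jacobi identity), at the cost of having to run the shuffle combinatorics inside nested Lie brackets; the paper's approach isolates the combinatorics in a cleaner associative setting and yields the standalone formula \eqref{eq_sh_com_assotiative}, which is of independent interest. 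To turn your sketch into a complete proof you would only need to write out the bijection and sign verification you describe in the last paragraph, which is routine.
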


Let $k,l\geq 1$ be natural numbers, $0\leq i\leq l-1$  and $(\alpha, \beta)\in {\sf Sh}^1(l-i,i)$. Consider the following permutations of $S_{k+l}$
$$\tilde{\sigma}_{\alpha,\beta,k,l} = \left(\begin{array}{cccccccccccc} 
1 & \cdots & k & k+1        & \cdots & k+i   & k+i+1       & \cdots & k+l    \\
1 & \cdots & k & k+\beta(i) & \cdots & k+\beta(1) & k+\alpha(1) & \cdots & k+\alpha(l-i)
\end{array}\right).$$
and $\sigma_{\alpha,\beta, k,l} = \tilde{\sigma}_{\alpha,\beta,k,l}\circ (1,2)^i.$
The set of all such permutations $\sigma_{\alpha,\beta,k,l}$ with fixed $k$ and $l$ is denoted by 
$$C_{k,l}=\{ \sigma_{\alpha,\beta,k,l}\mid 0\leq i\leq l-1, (\alpha,\beta)\in {\sf Sh}^1(l-i,i) \}.$$
\begin{Lemma}\label{lemma_bracket_sum} For any  $k,l\geq 1$ the equation $$[[x_1,\dots,x_k],[x_{k+1},\dots,x_{k+l}]]=\beta_{k+l}({\sf Sum}(C_{k,l}))$$
holds in $\mathbb Z\langle x_1,\dots,x_{k+l} \rangle$. 
\end{Lemma}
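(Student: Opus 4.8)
The plan is to interpret both sides in $\gamma_{k+l}$ and reduce the claim to Proposition \ref{Proposition_bracket_shaffles}. First I would observe that $[[x_1,\dots,x_k],[x_{k+1},\dots,x_{k+l}]] = \beta_{k+l}\bigl({\sf Sum}(T)\bigr)$ for \emph{some} subset $T$ (indeed, some signed sum of monomials, but one checks the coefficients here are all $0$ or $1$); the content is to identify $T$ with $C_{k,l}$. Write $a = [x_1,\dots,x_k]$ and $a_j = x_{k+j}$, and apply Proposition \ref{Proposition_bracket_shaffles} with $a, a_1,\dots,a_l$: this gives
\[
[[x_1,\dots,x_k],[x_{k+1},\dots,x_{k+l}]] = \sum_{i=0}^{l-1}\sum_{(\alpha,\beta)\in{\sf Sh}^1(l-i,i)} (-1)^i \bigl[a, x_{k+\beta(i)},\dots,x_{k+\beta(1)}, x_{k+\alpha(1)},\dots,x_{k+\alpha(l-i)}\bigr].
\]
Expanding $a = [x_1,\dots,x_k]$, each summand is a left-normed bracket $[x_1,\dots,x_k, x_{k+\beta(i)},\dots,x_{k+\alpha(l-i)}]$, i.e.\ it equals $\beta_{k+l}(x_{\tau(1)}\cdots x_{\tau(k+l)})$ for the permutation $\tau$ that fixes $1,\dots,k$ and sends $k+1,\dots,k+l$ to $k+\beta(i),\dots,k+\beta(1),k+\alpha(1),\dots,k+\alpha(l-i)$ in that order. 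That permutation is exactly $\tilde\sigma_{\alpha,\beta,k,l}$ by its defining matrix.

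Next I would deal with the sign $(-1)^i$. The definition sets $\sigma_{\alpha,\beta,k,l} = \tilde\sigma_{\alpha,\beta,k,l}\circ(1,2)^i$, and by Lemma \ref{Lemma_properties_of_Jacobi}(3) — more precisely by the computation underlying it — postcomposing (in the appropriate variable sense) with $(1,2)$ flips the sign of the left-normed bracket: $[x_{\rho(1)},x_{\rho(2)},x_{\rho(3)},\dots] = -[x_{\rho(2)},x_{\rho(1)},x_{\rho(3)},\dots]$ by antisymmetry of the innermost bracket. Applying this $i$ times (it is well-defined since $(1,2)^2 = e$, so only the parity of $i$ matters), one gets $\beta_{k+l}\bigl(x_{\tilde\sigma_{\alpha,\beta,k,l}(1)}\cdots\bigr) = (-1)^i\,\beta_{k+l}\bigl(x_{\sigma_{\alpha,\beta,k,l}(1)}\cdots\bigr)$, which absorbs the sign. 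Hence each term $(-1)^i[a,\dots]$ equals $\beta_{k+l}$ applied to the single monomial $x_{\sigma_{\alpha,\beta,k,l}(1)}\cdots x_{\sigma_{\alpha,\beta,k,l}(k+l)}$.

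Summing over all $i$ and all $(\alpha,\beta)\in{\sf Sh}^1(l-i,i)$, the right-hand side becomes $\beta_{k+l}\bigl(\sum_{\sigma\in C_{k,l}} x_{\sigma(1)}\cdots x_{\sigma(k+l)}\bigr) = \beta_{k+l}({\sf Sum}(C_{k,l}))$, provided the map $(i,\alpha,\beta)\mapsto \sigma_{\alpha,\beta,k,l}$ is injective on the index set, so that no cancellation or multiplicity occurs inside ${\sf Sum}(C_{k,l})$. I expect this injectivity to be the main point requiring care: one must check that distinct shuffles (across possibly distinct $i$) yield distinct permutations $\sigma_{\alpha,\beta,k,l}$. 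This follows because from $\sigma_{\alpha,\beta,k,l}$ one recovers $\tilde\sigma_{\alpha,\beta,k,l}$ up to swapping the values in positions $k+1,k+2$; the position of the ``descent pattern'' in $k+1,\dots,k+l$ (the length of the initial decreasing run $k+\beta(i),\dots,k+\beta(1)$ versus the increasing run $k+\alpha(1),\dots$) determines $i$, and then $\alpha,\beta$ are read off directly — with the normalization $\alpha(1)=1$ from ${\sf Sh}^1$ removing the residual $(1,2)$-ambiguity. Once injectivity is established, the two sides agree as elements of $\gamma_{k+l}$ after applying $\beta_{k+l}$, which is the assertion. (Strictly, the displayed identity is an equality in $\mathbb{Z}\langle x_1,\dots,x_{k+l}\rangle$ between elements already lying in the image of $\beta_{k+l}$; since every term on both sides is a left-normed bracket of a permutation of the $x_i$, the equality is literally the equality of these bracket expansions, so no appeal to injectivity of $\beta_{k+l}$ is needed.)
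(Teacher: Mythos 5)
Your argument is correct and is essentially the paper's own proof: apply Proposition \ref{Proposition_bracket_shaffles} with $a=[x_1,\dots,x_k]$ and $a_j=x_{k+j}$, identify each summand with the left-normed bracket indexed by $\tilde\sigma_{\alpha,\beta,k,l}$, and absorb the sign $(-1)^i$ through the factor $(1,2)^i$ using antisymmetry of the innermost bracket. Your additional check that $(i,\alpha,\beta)\mapsto\sigma_{\alpha,\beta,k,l}$ is injective (so the indexed sum really is ${\sf Sum}(C_{k,l})$) is a point the paper leaves implicit and is welcome; just note that $(1,2)^i$ swaps the values in positions $1,2$, not $k+1,k+2$, which only matters when $k=1$, and even then the position of the value $k+1$ (forced by the normalization $\alpha(1)=1$) recovers $i$, so the injectivity claim stands.
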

For natural numbers $k$ and $l$ we consider a permutation $\Phi_{k,l}\in S_{k+l}$ given by 
$$\Phi_{k,l}(i)=\begin{cases} 
i+k, & \text{ if } i\leq l,\\
i-l, & \text{ if } i>l.
\end{cases}$$
Roughly speaking, $\Phi_{k,l}$ shifts the interval $\{1,\dots,l\}$ on the place of the interval $\{k+1,\dots, k+l\}$ and shifts the interval $\{l+1,\dots,k+l\}$ on the place of the interval $\{1,\dots, k\}. $  
\begin{Corollary}\label{Corollary_bracket_sum} For any $k,l\geq 1$ the equation $$[[x_{k+1},\dots,x_{k+l}],[x_{1},\dots,x_{k}]]=\beta_{k+l}({\sf Sum}(\Phi_{k,l}C_{l,k}))$$
holds in $\mathbb Z\langle x_1,\dots,x_{k+l} \rangle$. 
\end{Corollary}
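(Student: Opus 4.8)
The plan is to deduce this directly from Lemma \ref{lemma_bracket_sum} with the roles of $k$ and $l$ interchanged, by relabelling the variables via the permutation $\Phi_{k,l}$. First I would apply Lemma \ref{lemma_bracket_sum} with $k$ and $l$ swapped to get, in $\mathbb Z\langle x_1,\dots,x_{k+l}\rangle$,
$$[[x_1,\dots,x_l],[x_{l+1},\dots,x_{l+k}]]=\beta_{k+l}({\sf Sum}(C_{l,k})).$$

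Next I would introduce the endomorphism $\phi$ of the associative ring $\mathbb Z\langle x_1,\dots,x_{k+l}\rangle$ determined by $x_i\mapsto x_{\Phi_{k,l}(i)}$. Since $\Phi_{k,l}$ sends $\{1,\dots,l\}$ onto $\{k+1,\dots,k+l\}$ by $i\mapsto i+k$, and $\{l+1,\dots,k+l\}$ onto $\{1,\dots,k\}$ by $i\mapsto i-l$, applying $\phi$ to the left-hand side above turns it into $[[x_{k+1},\dots,x_{k+l}],[x_1,\dots,x_k]]$, which is precisely the left-hand side of the Corollary. So it remains to identify the image of the right-hand side under $\phi$.

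The key observation is that $\phi$ is compatible with the bracketing map: because $\phi$ is a homomorphism of associative rings and the left-normed bracket is built from commutators, one has $\phi([x_{\sigma(1)},\dots,x_{\sigma(n)}])=[x_{\Phi_{k,l}\sigma(1)},\dots,x_{\Phi_{k,l}\sigma(n)}]$ for every $\sigma\in S_{k+l}$; equivalently, since $\phi$ permutes the basis of $\gamma_{k+l}$, we have $\phi\circ\beta_{k+l}=\beta_{k+l}\circ\phi$ on $\gamma_{k+l}$. Moreover, $\phi({\sf Sum}(C_{l,k}))=\sum_{\sigma\in C_{l,k}}x_{\Phi_{k,l}\sigma(1)}\dots x_{\Phi_{k,l}\sigma(n)}={\sf Sum}(\Phi_{k,l}C_{l,k})$. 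Combining these two facts, $\phi\bigl(\beta_{k+l}({\sf Sum}(C_{l,k}))\bigr)=\beta_{k+l}({\sf Sum}(\Phi_{k,l}C_{l,k}))$, which yields the asserted identity.

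There is essentially no serious obstacle here; the only thing to be careful about is the direction of the relabelling — namely, checking that it is $\Phi_{k,l}$, and not its inverse $\Phi_{l,k}$, that carries $C_{l,k}$ to the set appearing in the statement. This is immediate from the explicit description of $\Phi_{k,l}$ recalled just before the Corollary, together with the fact that $\Phi_{k,l}\circ\Phi_{l,k}={\rm id}$.
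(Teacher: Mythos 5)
Your proposal is correct and is essentially the paper's (implicit) argument: the paper offers no separate proof of Corollary \ref{Corollary_bracket_sum}, treating it exactly as you do, namely as the statement of Lemma \ref{lemma_bracket_sum} with $k$ and $l$ interchanged, transported by the variable relabelling $x_i\mapsto x_{\Phi_{k,l}(i)}$. Your verification that this relabelling commutes with $\beta_{k+l}$ and sends ${\sf Sum}(C_{l,k})$ to ${\sf Sum}(\Phi_{k,l}C_{l,k})$, and your check of the direction of $\Phi_{k,l}$ versus $\Phi_{l,k}$, are exactly the points that make the deduction work.
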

For $k,l\geq 1$ and $k+l\leq n$ we set
$$T_{k,l}:=C_{k,l} \cup (\Phi_{k,l} C_{l,k}), \hspace{1cm} T_{k,l,n}:=\iota_{k+l,n}(T_{k,l}).$$
\begin{Lemma}\label{Lemma_do_not_intersect} For $k,l\geq 1$ the subsets $C_{k,l}$ and $\Phi_{k,l}C_{l,k}$ do not intersect.
\end{Lemma}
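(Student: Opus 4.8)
The plan is to extract two elementary structural features of the permutations in $C_{k,l}$ straight from the definition, and then run a short block-tracking argument. Recall that every $\sigma\in C_{k,l}$ has the form $\sigma=\tilde\sigma\circ(1,2)^i$ with $\tilde\sigma=\tilde\sigma_{\alpha,\beta,k,l}$ and $0\le i\le l-1$. From the first $k$ columns of its two-line notation, $\tilde\sigma$ fixes $1,\dots,k$ pointwise, and from the remaining columns it maps $\{k+1,\dots,k+l\}$ bijectively onto itself (the images of $\alpha$ and $\beta$ partition $\{1,\dots,l\}$). Since $(1,2)^i\in\{\mathrm{id},(1,2)\}$, this yields: \textbf{(a)} $\sigma^{-1}(1)\in\{1,2\}$ for every $\sigma\in C_{k,l}$; and \textbf{(b)} if $k\ge 2$, every $\sigma\in C_{k,l}$ maps each of $\{1,\dots,k\}$ and $\{k+1,\dots,k+l\}$ onto itself (because then $(1,2)^i$ already preserves both). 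I would also record that $C_{m,1}=\{\mathrm{id}\}$ for every $m\ge 1$ (only $i=0$ is admissible and the unique shuffle in ${\sf Sh}^1(1,0)$ produces $\tilde\sigma=\mathrm{id}$), together with the one-line computations $\Phi_{k,l}^{-1}(1)=l+1$, $\Phi_{k,l}^{-1}(k+1)=1$ and $\Phi_{k,l}^{-1}(\{1,\dots,k\})=\{l+1,\dots,l+k\}$, all immediate from the definition of $\Phi_{k,l}$.

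Now suppose $\sigma\in C_{k,l}\cap\Phi_{k,l}C_{l,k}$ and write $\sigma=\Phi_{k,l}\tau$ with $\tau\in C_{l,k}$. In the main case $k\ge 2$ and $l\ge 2$, property (b) applied to $C_{k,l}$ gives $\sigma^{-1}(\{1,\dots,k\})=\{1,\dots,k\}$; on the other hand $\sigma^{-1}=\tau^{-1}\circ\Phi_{k,l}^{-1}$, so using the block identity for $\Phi_{k,l}^{-1}$ and then property (b) applied to $C_{l,k}$ (whose leading parameter $l$ is $\ge 2$, and whose second block is $\{l+1,\dots,l+k\}$) we get $\sigma^{-1}(\{1,\dots,k\})=\tau^{-1}(\{l+1,\dots,l+k\})=\{l+1,\dots,l+k\}$. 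Hence $\{1,\dots,k\}=\{l+1,\dots,l+k\}$, which is impossible since $l\ge 1$.

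The remaining cases have $\min(k,l)=1$, and this is where I expect the only real friction, because there the transposition $(1,2)$ no longer respects the partition $\{1,\dots,k\}\sqcup\{k+1,\dots,k+l\}$ and property (b) is unavailable. The way around it is that one of the two sets degenerates. If $l=1$, then $C_{k,1}=\{\mathrm{id}\}$, so $\sigma=\mathrm{id}$, and evaluating $\sigma^{-1}=\tau^{-1}\circ\Phi_{k,1}^{-1}$ at $k+1$ gives $k+1=\tau^{-1}(\Phi_{k,1}^{-1}(k+1))=\tau^{-1}(1)\in\{1,2\}$ by (a), forcing $k=1$; but then $\Phi_{1,1}C_{1,1}=\{\Phi_{1,1}\}$ and $\Phi_{1,1}(1)=2\ne 1$, so $\mathrm{id}\notin\Phi_{1,1}C_{1,1}$, a contradiction. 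If instead $k=1$ and $l\ge 2$, then $C_{l,1}=\{\mathrm{id}\}$ forces $\tau=\mathrm{id}$ and $\sigma=\Phi_{1,l}$, whence by (a) $l+1=\Phi_{1,l}^{-1}(1)=\sigma^{-1}(1)\in\{1,2\}$, contradicting $l\ge 2$. This covers all $k,l\ge 1$.
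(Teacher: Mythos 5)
Your proof is correct and takes essentially the same route as the paper: an elementary case analysis resting on the block structure of $C_{k,l}$ (that $\tilde\sigma_{\alpha,\beta,k,l}$ fixes the first block, that $(1,2)^i$ only touches $1,2$, and that $C_{m,1}=\{\mathrm{id}\}$) together with the explicit form of $\Phi_{k,l}$. The only differences are cosmetic — the paper tracks the image of $\{1,2\}$ and splits the cases as $k\ge 2$, then $k=1,l\ge 2$, then $k=l=1$, while you track $\sigma^{-1}(1)$ and block preservation with the split $k,l\ge 2$, then $l=1$, then $k=1,l\ge 2$.
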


\begin{Theorem}\label{Theorem1} For $k,l\geq 1$ and $k+l\leq n$ the set $T_{k,l,n}$ is a Jacobi subset of $S_n.$
\end{Theorem}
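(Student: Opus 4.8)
The plan is to reduce at once to the case $n=k+l$ and then to verify the identity inside the free associative ring, where Proposition~\ref{Proposition_bracket_shaffles}, Lemma~\ref{lemma_bracket_sum}, Corollary~\ref{Corollary_bracket_sum} and Lemma~\ref{Lemma_do_not_intersect} have already done all the combinatorial work. First I would note that $T_{k,l,n}=\iota_{k+l,n}(T_{k,l})$, so by part 4 of Lemma~\ref{Lemma_properties_of_Jacobi} it is enough to prove that $T_{k,l}=C_{k,l}\cup(\Phi_{k,l}C_{l,k})$ is a Jacobi subset of $S_{k+l}$. By Lemma~\ref{Lemma_equivalent_Jacobi} this is in turn equivalent to the single assertion $\beta_{k+l}\bigl({\sf Sum}(T_{k,l})\bigr)=0$ in $\gamma_{k+l}$.

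Next I would use that the two sets $C_{k,l}$ and $\Phi_{k,l}C_{l,k}$ are disjoint (Lemma~\ref{Lemma_do_not_intersect}), so that ${\sf Sum}$ splits additively over the union:
\[
{\sf Sum}(T_{k,l})={\sf Sum}(C_{k,l})+{\sf Sum}(\Phi_{k,l}C_{l,k}).
\]
Applying the additive homomorphism $\beta_{k+l}$ and invoking Lemma~\ref{lemma_bracket_sum} on the first summand and Corollary~\ref{Corollary_bracket_sum} on the second gives
\[
\beta_{k+l}\bigl({\sf Sum}(T_{k,l})\bigr)=\bigl[[x_1,\dots,x_k],[x_{k+1},\dots,x_{k+l}]\bigr]+\bigl[[x_{k+1},\dots,x_{k+l}],[x_1,\dots,x_k]\bigr].
\]
Setting $u=[x_1,\dots,x_k]$ and $v=[x_{k+1},\dots,x_{k+l}]$ in the commutator Lie structure of $\mathbb Z\langle x_1,\dots,x_{k+l}\rangle$, the right-hand side is $[u,v]+[v,u]$, which vanishes by antisymmetry of the bracket. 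Hence ${\sf Sum}(T_{k,l})\in{\rm Ker}(\beta_{k+l})$, and Lemma~\ref{Lemma_equivalent_Jacobi} together with part 4 of Lemma~\ref{Lemma_properties_of_Jacobi} finishes the proof.

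I do not expect a genuine obstacle at this stage: the whole content of the theorem has been front-loaded into the earlier statements, and what remains is the assembly above. The only delicate points are the ones already handled — in particular the sign $(-1)^i$ and the twist by $(1,2)^i$ in the definition of $\sigma_{\alpha,\beta,k,l}$, which are exactly what force $\beta_{k+l}({\sf Sum}(C_{k,l}))$ to equal the honest iterated commutator $[[x_1,\dots,x_k],[x_{k+1},\dots,x_{k+l}]]$ rather than a signed or permuted variant. Given Lemma~\ref{lemma_bracket_sum} and Corollary~\ref{Corollary_bracket_sum}, the theorem is simply the Lie-antisymmetry identity $[u,v]+[v,u]=0$ read off from the left-normed expansions encoded by $C_{k,l}$ and $\Phi_{k,l}C_{l,k}$.
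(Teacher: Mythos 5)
Your proposal is correct and follows essentially the same route as the paper's own proof: split ${\sf Sum}(T_{k,l})$ over the disjoint union via Lemma \ref{Lemma_do_not_intersect}, apply Lemma \ref{lemma_bracket_sum} and Corollary \ref{Corollary_bracket_sum}, and conclude by antisymmetry, with Lemma \ref{Lemma_equivalent_Jacobi} and Lemma \ref{Lemma_properties_of_Jacobi}(4) handling the passage to general $n$. You merely make explicit the reduction steps the paper leaves implicit.
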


\begin{Theorem}[{cf. 8.6.7 of \cite{Reutenauer}}]\label{Theorem2} For any $n\geq 2$ the set $$\{{\sf Sum}(\sigma T_{k,1,n}) \mid \sigma\in S_n, \  \sigma(k+1)=1,\  1\leq k \leq n-1 \}$$
is a basis of the free abelian group ${\sf Ker}(\beta_n).$ In particular, the rank of ${\sf Ker}(\beta_n)$ equals to \hbox{$(n-1)!\cdot (n-1).$}
\end{Theorem}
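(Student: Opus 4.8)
The plan is to enlarge the proposed family to a $\mathbb Z$-basis of the whole group $\gamma_n$. Put
$$\mathcal B=\{\,{\sf Sum}(\sigma T_{k,1,n})\mid 1\le k\le n-1,\ \sigma\in S_n,\ \sigma(k+1)=1\,\}\ \cup\ \{\,x_1x_{i_2}\cdots x_{i_n}\,\},$$
where in the second block $(i_2,\dots,i_n)$ runs over all permutations of $2,\dots,n$. This is a list of $n!$ vectors of $\gamma_n$: the first block is indexed by the $(n-1)!\,(n-1)$ pairs $(k,\sigma)$, the second by the $(n-1)!$ permutations. By Theorem~\ref{Theorem1}, Lemma~\ref{Lemma_properties_of_Jacobi}(2) and Lemma~\ref{Lemma_equivalent_Jacobi} every vector in the first block lies in ${\rm Ker}(\beta_n)$. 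I would then prove two statements: (i) $\mathcal B$ is a $\mathbb Z$-basis of $\gamma_n$ (in particular its $n!$ vectors are pairwise distinct); (ii) $\beta_n$ is injective on the subgroup $B$ spanned by the second block. Granting these, write $\gamma_n=A\oplus B$ with $A$ the span of the first block; then $A\subseteq{\rm Ker}(\beta_n)$, and if $\xi\in{\rm Ker}(\beta_n)$ is written $\xi=a+b$ with $a\in A,\ b\in B$, then $\beta_n(b)=\beta_n(\xi)-\beta_n(a)=0$, so $b=0$ by (ii). Hence ${\rm Ker}(\beta_n)=A$, the first block is a $\mathbb Z$-basis of ${\rm Ker}(\beta_n)$, and its cardinality $(n-1)!\,(n-1)$ is the rank.

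Statement (ii) is easy. Since $\beta_n(x_1x_{i_2}\cdots x_{i_n})=[x_1,x_{i_2},\dots,x_{i_n}]$, it suffices to see these brackets are $\mathbb Q$-linearly independent; and an induction on $n$ based on $[u,x_{i_n}]=u\,x_{i_n}-x_{i_n}\,u$ shows that, expanded in the monomial basis of $\gamma_n$, the bracket $[x_1,x_{i_2},\dots,x_{i_n}]$ contains $x_1x_{i_2}\cdots x_{i_n}$ with coefficient $1$ and contains no other monomial beginning with $x_1$, so projecting onto the monomials beginning with $x_1$ kills any linear dependence.

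Statement (i) is the main point, and I would prove it by a triangularity argument. First, ${\sf Sh}^1(1,0)$ is a singleton, so $C_{k,1}=\{{\rm id}\}$; together with Lemma~\ref{Lemma_do_not_intersect} this shows ${\rm id}\in T_{k,1,n}$ and that ${\rm id}$ is the only element of $T_{k,1,n}$ not lying in $\iota_{k+1,n}(\Phi_{k,1}C_{1,k})$. Hence, for $\sigma$ with $\sigma(k+1)=1$, the monomial $x_{\sigma(1)}\cdots x_{\sigma(n)}$ (in which $x_1$ sits in position $k+1$) occurs in ${\sf Sum}(\sigma T_{k,1,n})$ with coefficient $1$, and every other monomial there is the monomial of some $\sigma\,\iota_{k+1,n}(\Phi_{k,1}\rho)$ with $\rho\in C_{1,k}$. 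In such a monomial, because $\sigma^{-1}(1)=k+1$ and $\Phi_{k,1}(1)=k+1$, the letter $x_1$ sits in position $\rho^{-1}(1)$. Writing $\rho=\tilde{\sigma}_{\alpha,\beta,1,k}\circ(1,2)^i$ and using that $\tilde{\sigma}_{\alpha,\beta,1,k}$ fixes $1$, one checks that $\rho^{-1}(1)\in\{1,2\}$ in general and $\rho^{-1}(1)=1$ when $k=1$; in every case $\rho^{-1}(1)<k+1$. Thus $x_{\sigma(1)}\cdots x_{\sigma(n)}$ is the unique monomial of ${\sf Sum}(\sigma T_{k,1,n})$ in which $x_1$ sits in position $k+1$, and all other monomials have $x_1$ strictly further to the left. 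Now order the monomial basis of $\gamma_n$ by the position of $x_1$ in the monomial (fixing any refinement to a total order), and pair $x_1x_{i_2}\cdots x_{i_n}$ with itself and ${\sf Sum}(\sigma T_{k,1,n})$ with $x_{\sigma(1)}\cdots x_{\sigma(n)}$. For fixed $k$ the map $\sigma\mapsto x_{\sigma(1)}\cdots x_{\sigma(n)}$ is a bijection from $\{\sigma\in S_n:\sigma(k+1)=1\}$ onto the monomials in which $x_1$ sits in position $k+1$; letting $k$ run over $1,\dots,n-1$, these, together with the monomials beginning with $x_1$, exhaust the monomial basis, so the pairing is a bijection from $\mathcal B$ onto the monomial basis. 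With rows and columns ordered accordingly, the matrix expressing the vectors of $\mathcal B$ in the monomial basis is triangular with $1$'s on the diagonal, hence unimodular, so $\mathcal B$ is a $\mathbb Z$-basis of $\gamma_n$.

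The one genuinely computational point, which I expect to take some care, is the claim $\rho^{-1}(1)\in\{1,2\}$ for $\rho\in C_{1,k}$ — equivalently, that in every monomial of ${\sf Sum}(\Phi_{k,1}C_{1,k})$ the letter $x_1$ occupies the first or the second place. Checking this forces one to unwind the description of $C_{1,k}$ in terms of the shuffles ${\sf Sh}^1(k-i,i)$ and the twist by $(1,2)^i$. Everything else — the cardinality count, the splitting argument at the end, and the leading-term bookkeeping — is routine.
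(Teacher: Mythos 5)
Your argument is correct, but it takes a genuinely different route from the paper. The paper's proof is short because it leans on the theorem of Blessenohl--Laue (also in Reutenauer) that the elements $\sigma\theta_{j,n}$ with $\sigma(j)=1$, where $\theta_{j,n}=x_1\cdots x_n+x_j[x_1,\dots,x_{j-1}]x_{j+1}\cdots x_n$, form a basis of ${\rm Ker}(\beta_n)$; all that remains there is the computation $\theta_{j,n}={\sf Sum}(T_{j-1,1,n})$, which follows from $C_{j-1,1}=\{()\}$ and Lemma \ref{lemma_bracket_sum}. You instead reprove that external input from scratch: you augment the candidate family by the $(n-1)!$ monomials beginning with $x_1$, show the enlarged family of $n!$ vectors is unimodular over the monomial basis by a triangularity argument graded by the position of $x_1$, and kill the complement inside the kernel using the classical independence of the left-normed brackets $[x_1,x_{i_2},\dots,x_{i_n}]$. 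I checked the two points you flag: since $\tilde\sigma_{\alpha,\beta,1,k}$ fixes $1$ and $\rho=\tilde\sigma_{\alpha,\beta,1,k}\circ(1,2)^i$, one gets $\rho^{-1}(1)=(1,2)^i(1)\in\{1,2\}$, with $i=0$ forced when $k=1$, and $\Phi_{k,1}(1)=k+1$ together with $\sigma(k+1)=1$ then places $x_1$ in position $\rho^{-1}(1)<k+1$ in every non-leading monomial, so the triangularity is genuine; and the induction showing $[x_1,x_{i_2},\dots,x_{i_n}]$ contains $x_1x_{i_2}\cdots x_{i_n}$ with coefficient $1$ and no other monomial starting with $x_1$ is sound, which gives injectivity of $\beta_n$ on the span of the second block (there is no circularity in invoking Theorem \ref{Theorem1} and Lemmas \ref{Lemma_properties_of_Jacobi}, \ref{Lemma_equivalent_Jacobi} for the first block). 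What each approach buys: the paper's is a few lines but rests on a nontrivial citation; yours is longer but self-contained, and as a by-product it recovers the cited result itself, including the ranks $(n-1)!\,(n-1)$ for the kernel and $(n-1)!$ for the image of $\beta_n$.
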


\subsection{2-Jacobi subsets} 

A subset $T$ of the symmetric group $S_n$ is said to be {\it 2-Jacobi} if the identity \eqref{id_T} holds in any Lie algebra over the field $\mathbb Z/2$ (equivalently, over a field of characteristic 2). Of course, any Jacobi subset is 2-Jacobi. But there are a lot of 2-Jacobi subsets which are not Jacobi subsets. For example, $\{(),(123), (13)\}.$ Moreover, it is easy to check that for $n=3$ the number Jacobi subsets is $10$ but the number of $2$-Jacobi subsets is $16.$ The advantage of 2-Jacobi subsets is that it is easier to investigate them. For example, we do not know the number of Jacobi subsets of $S_n$ but we know the number of 2-Jacobi subsets: $2^{(n-1)!\cdot (n-1)}.$   

\begin{Proposition}\label{Proposition_2-Jacobi} The set of 2-Jacobi subsets of $S_n$ is closed under symmetric difference. Moreover, the set of 2-Jacobi subsets of $S_n$ is a vector space over $\mathbb Z/2$ with respect to the symmetric difference with a  basis is given by $$\{\sigma T_{k,1,n} \mid \sigma\in S_n, \  \sigma(k+1)=1,\  1\leq k \leq n-1 \}.$$ 
\end{Proposition}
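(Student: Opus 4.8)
The plan is to reduce the whole statement to linear algebra over $\mathbb Z/2$, in the spirit of Lemma~\ref{Lemma_equivalent_Jacobi}. Write $\beta_n^{(2)}:=\beta_n\otimes{\rm id}_{\mathbb Z/2}$ for the induced $\mathbb Z/2$-linear endomorphism of $V:=\gamma_n\otimes\mathbb Z/2$, the $\mathbb Z/2$-space of dimension $n!$ with basis the classes of the monomials $x_{\sigma(1)}\cdots x_{\sigma(n)}$, $\sigma\in S_n$. I would first establish the characteristic-$2$ analogue of Lemma~\ref{Lemma_equivalent_Jacobi}: a subset $T\subseteq S_n$ is $2$-Jacobi if and only if $\overline{{\sf Sum}(T)}\in{\rm Ker}(\beta_n^{(2)})$, where $\overline{(\cdot)}$ denotes reduction mod $2$. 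Its proof is the proof of Lemma~\ref{Lemma_equivalent_Jacobi} with $\mathbb Z$ replaced by $\mathbb Z/2$ throughout; the only extra input is that the free Lie ring is a direct summand of the free associative ring, so the free Lie algebra over $\mathbb Z/2$ still embeds into the free associative algebra over $\mathbb Z/2$, and hence the identity \eqref{id_T} over $\mathbb Z/2$ is equivalent to $\beta_n^{(2)}(\overline{{\sf Sum}(T)})=0$. Now $T\mapsto\overline{{\sf Sum}(T)}$ is an isomorphism of $\mathbb Z/2$-vector spaces from the power set of $S_n$ under symmetric difference onto $V$: it sends singletons to basis monomials and $T\triangle T'$ to $\overline{{\sf Sum}(T)}+\overline{{\sf Sum}(T')}$. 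Under this isomorphism the $2$-Jacobi subsets correspond exactly to the subspace ${\rm Ker}(\beta_n^{(2)})$, so they form a $\mathbb Z/2$-subspace; in particular they are closed under symmetric difference, which is the first assertion.

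It remains to show that $\overline B:=\{\,\overline{{\sf Sum}(\sigma T_{k,1,n})}\mid \sigma\in S_n,\ \sigma(k+1)=1,\ 1\le k\le n-1\,\}$ is a $\mathbb Z/2$-basis of ${\rm Ker}(\beta_n^{(2)})$; pulling this back through the isomorphism above then gives the stated basis of the space of $2$-Jacobi subsets. For linear independence I would use Theorem~\ref{Theorem2}: the corresponding set $B$ of elements ${\sf Sum}(\sigma T_{k,1,n})$ is a $\mathbb Z$-basis of ${\rm Ker}(\beta_n)$, and ${\rm Ker}(\beta_n)$ is a direct summand of $\gamma_n$ because $\gamma_n/{\rm Ker}(\beta_n)\cong{\rm Im}(\beta_n)$ is free abelian, being a subgroup of the free abelian group $\gamma_n$. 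Hence the short exact sequence $0\to{\rm Ker}(\beta_n)\to\gamma_n\to{\rm Im}(\beta_n)\to 0$ splits, so tensoring with $\mathbb Z/2$ embeds ${\rm Ker}(\beta_n)\otimes\mathbb Z/2$ as a subspace of $V$ of dimension $(n-1)!\,(n-1)$ with basis $\overline B$. Since $\beta_n^{(2)}(\overline k)=\overline{\beta_n(k)}=0$ for $k\in{\rm Ker}(\beta_n)$, this subspace sits inside ${\rm Ker}(\beta_n^{(2)})$, so $\overline B\subseteq{\rm Ker}(\beta_n^{(2)})$ is linearly independent of cardinality $(n-1)!\,(n-1)$.

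Finally I must check that $\overline B$ spans ${\rm Ker}(\beta_n^{(2)})$, equivalently $\dim_{\mathbb Z/2}{\rm Ker}(\beta_n^{(2)})\le(n-1)!\,(n-1)$, i.e. ${\rm rank}\,\beta_n^{(2)}\ge(n-1)!$. The image of $\beta_n^{(2)}$ contains the $(n-1)!$ elements $\overline{[x_1,x_{\tau(2)},\dots,x_{\tau(n)}]}$ with $\tau$ running over the permutations of $\{1,\dots,n\}$ fixing $1$, and these are $\mathbb Z/2$-independent: by an easy induction on $n$, using $[w,x_{\tau(n)}]=wx_{\tau(n)}-x_{\tau(n)}w$, the only monomial occurring in the expansion of $[x_1,x_{\tau(2)},\dots,x_{\tau(n)}]$ that begins with $x_1$ is $x_1x_{\tau(2)}\cdots x_{\tau(n)}$, and it occurs with coefficient $1$; hence any nonzero $\mathbb Z/2$-combination of these elements has a nonzero coefficient on some monomial beginning with $x_1$. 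Combined with ${\rm rank}\,\beta_n^{(2)}\le{\rm rank}_{\mathbb Q}\,\beta_n=(n-1)!$ (the equality coming from Theorem~\ref{Theorem2} and rank--nullity), this forces ${\rm rank}\,\beta_n^{(2)}=(n-1)!$, so $\dim_{\mathbb Z/2}{\rm Ker}(\beta_n^{(2)})=n!-(n-1)!=(n-1)!\,(n-1)=|\overline B|$ and $\overline B$ is a basis. The crux of the argument is exactly this last step — that reduction mod $2$ does not drop the rank of $\beta_n$ — which is where one genuinely needs information about the integral structure of the free Lie ring (encoded here in the ``unique leading monomial starting with $x_1$'' property) rather than only the formal consequences of Theorems~\ref{Theorem1}–\ref{Theorem2}.
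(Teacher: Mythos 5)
Your proposal is correct, and its first half (the mod-$2$ analogue of Lemma~\ref{Lemma_equivalent_Jacobi}, the identification of the power set of $S_n$ under symmetric difference with $\gamma_n\otimes\mathbb Z/2$ via $T\mapsto\overline{{\sf Sum}(T)}$, and the identification of the $2$-Jacobi subsets with ${\rm Ker}(\beta_n\otimes{\rm id})$) is exactly the paper's argument. Where you genuinely diverge is the basis claim: the paper simply asserts that Theorem~\ref{Theorem2} implies that the reductions ${\sf Sum}^2(\sigma T_{k,1,n})$ form a basis of ${\rm Ker}(\beta^2_n)$, whereas you observe, correctly, that this is not a formal consequence --- reducing mod $2$ can only enlarge the kernel, so a $\mathbb Z$-basis of ${\rm Ker}(\beta_n)$ reduces to an independent set but need not span ${\rm Ker}(\beta^2_n)$ unless the rank of $\beta_n$ does not drop mod $2$ (equivalently, $\gamma_n/{\rm Im}(\beta_n)$ has no $2$-torsion). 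You supply both halves: independence via the splitting of $0\to{\rm Ker}(\beta_n)\to\gamma_n\to{\rm Im}(\beta_n)\to 0$ (valid since ${\rm Im}(\beta_n)$ is free), and spanning via the rank bound ${\rm rank}\,\beta^{(2)}_n\ge(n-1)!$, proved by the ``unique monomial beginning with $x_1$'' property of the left-normed brackets $[x_1,x_{\tau(2)},\dots,x_{\tau(n)}]$, which is correct (it is immediate from the shuffle expansion \eqref{eq_sh_com_assotiative}: only the $i=0$ term starts with $x_1$). So your route is the same reduction to linear algebra, but it fills in the one step the paper leaves implicit; the paper's shortcut effectively presupposes that the Blessenohl--Laue basis statement behaves well under base change to $\mathbb Z/2$, while your argument derives everything from the integral Theorem~\ref{Theorem2} alone, at the cost of one extra (but elementary) leading-monomial computation. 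Your appeal to the free Lie ring being a direct summand of $\mathbb Z\langle x_1,\dots,x_n\rangle$ to justify the embedding of the free Lie algebra over $\mathbb Z/2$ is also fine and is the same standard fact the paper invokes with ``similarly to Lemma~\ref{Lemma_equivalent_Jacobi}.''
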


\begin{Corollary}\label{Corollary_number_2-Jacobi} The number of 2-Jacobi subsets of $S_n$ equals to $2^{(n-1)!\cdot (n-1)}.$
\end{Corollary}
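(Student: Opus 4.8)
The plan is to deduce the count from Proposition \ref{Proposition_2-Jacobi} together with the elementary fact that a vector space of dimension $d$ over $\mathbb Z/2$ has exactly $2^d$ elements; the only nontrivial point is pinning down the dimension.

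First I would record the linear-algebra set-up: subsets of $S_n$ are in bijection with the $\mathbb Z/2$-vector space $(\mathbb Z/2)^{S_n}$ of $\mathbb Z/2$-valued functions on $S_n$, a subset corresponding to its indicator function, and under this bijection the symmetric difference of subsets corresponds to the sum of the associated vectors. By Proposition \ref{Proposition_2-Jacobi} the 2-Jacobi subsets form a linear subspace $W$ of this space with basis
$$\mathcal B=\{\, \sigma T_{k,1,n}\mid \sigma\in S_n,\ \sigma(k+1)=1,\ 1\le k\le n-1 \,\}.$$
Hence $|W|=2^{\dim_{\mathbb Z/2} W}=2^{|\mathcal B|}$, and it remains only to check that $|\mathcal B|=(n-1)!\cdot(n-1)$.

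To count $\mathcal B$, I would index its elements by pairs $(k,\sigma)$ with $1\le k\le n-1$ and $\sigma(k+1)=1$: there are $n-1$ admissible values of $k$, and for each one there are exactly $(n-1)!$ permutations $\sigma\in S_n$ with the prescribed value $\sigma(k+1)=1$, giving $(n-1)!\cdot(n-1)$ pairs in total. It then remains to see that distinct pairs give distinct subsets. One clean way is to invoke Theorem \ref{Theorem2}: the monomials $x_{\sigma(1)}\cdots x_{\sigma(n)}$ form a $\mathbb Z$-basis of $\gamma_n$, so ${\sf Sum}$ is injective on subsets of $S_n$; and Theorem \ref{Theorem2} asserts that $\{{\sf Sum}(\sigma T_{k,1,n})\mid \sigma\in S_n,\ \sigma(k+1)=1,\ 1\le k\le n-1\}$ is a basis — hence a set of exactly $(n-1)!\cdot(n-1)$ elements — of the free abelian group ${\sf Ker}(\beta_n)$. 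Injectivity of ${\sf Sum}$ transports this back to $|\mathcal B|=(n-1)!\cdot(n-1)$. (Alternatively, distinctness of the members of $\mathcal B$ is already forced by Proposition \ref{Proposition_2-Jacobi}, since a basis is in particular a set of pairwise distinct linearly independent vectors; what is genuinely needed is that these $(n-1)!\cdot(n-1)$ index pairs are not redundant.) Combining, $\dim_{\mathbb Z/2} W=(n-1)!\cdot(n-1)$, and therefore the number of 2-Jacobi subsets of $S_n$ is $|W|=2^{(n-1)!\cdot(n-1)}$.

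The main — and essentially only — obstacle is this bookkeeping step: verifying that the $(n-1)!\cdot(n-1)$ index pairs $(k,\sigma)$ really do produce $(n-1)!\cdot(n-1)$ distinct subsets, i.e.\ that the indexing used in Proposition \ref{Proposition_2-Jacobi} and Theorem \ref{Theorem2} is non-redundant. Once that is in hand, the passage to $2^{(n-1)!\cdot(n-1)}$ is immediate from the structure of a finite-dimensional $\mathbb Z/2$-vector space.
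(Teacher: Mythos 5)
Your proof is correct and follows the same route the paper intends: the corollary is an immediate consequence of Proposition \ref{Proposition_2-Jacobi}, since a $\mathbb Z/2$-vector space with a basis of cardinality $(n-1)!\cdot(n-1)$ (the count coming from Theorem \ref{Theorem2}, which fixes the rank and hence the non-redundancy of the index pairs $(k,\sigma)$) has exactly $2^{(n-1)!\cdot(n-1)}$ elements. Your extra care about distinctness of the sets $\sigma T_{k,1,n}$ is a reasonable bookkeeping check, but it is the same argument the paper leaves implicit.
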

\begin{Corollary}
Any 2-Jacobi subset can be presented as a symmetric difference of several Jacobi subsets. 
\end{Corollary}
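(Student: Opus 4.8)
The plan is to read this off immediately from Proposition \ref{Proposition_2-Jacobi}, combined with Theorem \ref{Theorem1} and part (2) of Lemma \ref{Lemma_properties_of_Jacobi}. The point is that the spanning set exhibited in Proposition \ref{Proposition_2-Jacobi} consists of sets that are not merely 2-Jacobi but honestly Jacobi.

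First I would invoke Proposition \ref{Proposition_2-Jacobi}: the collection of 2-Jacobi subsets of $S_n$ forms a vector space over $\mathbb Z/2$ under symmetric difference, and it is spanned by the sets $\sigma T_{k,1,n}$ with $\sigma\in S_n$, $\sigma(k+1)=1$, $1\le k\le n-1$. Consequently, given an arbitrary 2-Jacobi subset $T\subseteq S_n$, there exist finitely many such sets $A_1,\dots,A_m$ with $T = A_1\,\triangle\,\cdots\,\triangle\,A_m$, where $\triangle$ denotes symmetric difference (and the sum is a $\mathbb Z/2$-linear combination, so no coefficients other than $0,1$ occur).

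Next I would check that each $A_j=\sigma_j T_{k_j,1,n}$ is a Jacobi subset. Indeed, $T_{k_j,1,n}$ is Jacobi by Theorem \ref{Theorem1}, and then $\sigma_j T_{k_j,1,n}$ is Jacobi by part (2) of Lemma \ref{Lemma_properties_of_Jacobi}. Hence $T$ has been written as a symmetric difference of Jacobi subsets, which is precisely the assertion.

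There is essentially no obstacle here once Proposition \ref{Proposition_2-Jacobi} is available; the only point requiring a word of care is the degenerate case $T=\emptyset$, the zero vector, which corresponds to the empty symmetric difference — or, if a nonempty presentation is preferred, one may write $\emptyset = T_{1,1,n}\,\triangle\,T_{1,1,n}$, using that $T_{1,1,n}$ is Jacobi by Theorem \ref{Theorem1} and that $n\ge 2$ is needed for $T_{1,1,n}$ to be defined.
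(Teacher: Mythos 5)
Your argument is correct and is exactly the intended one: the paper derives this corollary directly from Proposition \ref{Proposition_2-Jacobi}, since each spanning set $\sigma T_{k,1,n}$ is Jacobi by Theorem \ref{Theorem1} together with Lemma \ref{Lemma_properties_of_Jacobi}(2). Your extra remark about the empty set is a harmless edge case and does not change anything.
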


\section{Proofs}\label{section_Proofs}

\begin{proof}[Proof of Lemma \ref{Lemma_properties_of_Jacobi}] (1) Obvious. 

(2) Since $T$ is Jacobi, we have $\sum_{\tau \in T} [a_{\tau(1)},\dots,a_{\tau(n)}]=0$ for any elements $a_1,\dots, a_n$ of any Lie ring $L.$ Set $a_i=b_{\sigma(i)}.$ Then $0=\sum_{\tau \in T} [a_{\tau(1)},\dots,a_{\tau(n)}]=\sum_{\tau \in T} [b_{\sigma\tau(1)},\dots,b_{\sigma\tau(n)}]=\sum_{\tau \in \sigma T} [b_{\tau(1)},\dots,b_{\tau(n)}]$ for any elements $b_1,\dots,b_n$ of any lie ring $L.$ Then $\sigma T$ is Jacobi.

(3) $\sum_{\tau \in T(1,2)} $ 
$ [a_{\tau(1)},a_{\tau(2)}, \dots,$ 
$a_{\tau(n)}]$ 
$=$ 
$\sum_{\tau \in T} $
$[a_{\tau(2)}, a_{\tau(1)}\dots,$ 
$a_{\tau(n)}]$ 
$=$ 
$-(\sum_{\tau \in T} $
$[a_{\tau(1)}, a_{\tau(2)}\dots,$
$a_{\tau(n)}])$ 
$=$ 
$0.$
 
(4) Consider any elements $a_1,\dots,a_n, \dots , a_m.$ Then  $\sum_{\tau \in T} [a_{\tau(1)},\dots,a_{\tau(n)}]=0,$ 
and hence $0$ 
$=$ 
$[[\sum_{\tau \in T}$ 
$[a_{\tau(1)},\dots,a_{\tau(n)}]$ 
$,$ 
$a_{n+1}, \dots , a_{m} ] $ 
$=$ 
$\sum_{\tau \in T}$ 
$[a_{\tau(1)},\dots,a_{\tau(n)},$ 
$a_{n+1},\dots,a_m]$ 
$=$ 
$\sum_{\tau \in \iota_{n,m}(T)}$ 
$[a_{\tau(1)},\dots,a_{\tau(n)}]. $

(5) By (2) we get that all cosets $gH$ are Jacobi for any $g\in G$, and by (1) we obtain that their union is Jacobi. 

(6) and (7) follow from (5).
\end{proof}
\begin{proof}[Proof of Lemma \ref{Lemma_equivalent_Jacobi}] 
Denote by $L(x_1,\dots,x_n)$ the Lie subring of $\mathbb Z\langle x_1,\dots,x_n \rangle$ generated by $x_1,\dots,x_n.$ The Lie ring $L(x_1,\dots,x_n)$ is the free Lie ring on $x_1,\dots,x_n$ \cite[Theorem 0.5]{Reutenauer}. Then for any elements $a_1,\dots,a_n$ of a Lie ring $L$ there exist a unique Lie algebra homomorphism $f:L(x_1,\dots,x_n)\to L$ such that $f(x_i)=a_i.$ It follows that $T\subseteq S_n$ is Jacobi if and only if $\sum_{\sigma\in T}[x_{\sigma(1)},\dots, x_{\sigma(n)}]=0$ in $\mathbb Z\langle x_1,\dots x_n \rangle.$ Then the statement follows from the equality 
 $\beta_n({\sf Sum}(T))=\beta_n(\sum_{\sigma \in T} a_{\sigma(1)}\dots a_{\sigma(n)})=\sum_{\tau \in T} [a_{\tau(1)},\dots,a_{\tau(n)}]$.
\end{proof}
\begin{proof}[Proof of Proposition \ref{Proposition_bracket_shaffles}]
First we prove the following statement that seems to be known but we can not find a good reference. 
Let $R$ be an associative ring and $a_1,\dots,a_n\in R$. Then 
\begin{equation}\label{eq_sh_com_assotiative}
[a_1,\dots, a_n]=\sum_{i=0}^{n-1} \ \sum_{(\alpha,\beta)} (-1)^i a_{\beta(i)}\dots a_{\beta(1)}a_{\alpha(1)}\dots a_{\alpha(n-i)},
\end{equation}
where the second sum is taken  $(\alpha,\beta)\in {\sf Sh}^1(n-i,i).$ The prove is by induction. For $n=2$ it is obvious. Assume that the formula holds for $[a_1,\dots,a_n]$ and prove it for $[a_1,\dots,a_{n+1}].$ The element $[a_1,\dots,a_{n+1}]$ is the sum of elements 
$$(-1)^i a_{\beta(i)}\dots a_{\beta(1)}a_{\alpha(1)}\dots a_{\alpha(n-i)}a_{n+1}+(-1)^{i+1} a_{n+1}a_{\beta(i)}\dots a_{\beta(1)}a_{\alpha(1)}\dots a_{\alpha(n-i)},$$
where sum is taken over $0\leq i<n$ and $(\alpha,\beta)\in {\sf Sh}(n-i,i).$ Any $(n+1-i,i)$-shuffle $(\alpha(1),\dots,\alpha(n+1-i);\beta(1),\dots, \beta(i))$ is equal to either  $(\alpha'(1),\dots,\alpha'(n-i),n+1;\beta(1),\dots, \beta(i))$ for a $(n-i,i)$-shuffle $(\alpha,\beta')$ or $(\alpha(1),\dots,\alpha(n+1-i);\beta'(1),\dots, \beta'(i-1),n+1)$ for a $(n+1-i,i-1)$-shuffle $(\alpha,\beta').$ The assertion follows.

Now we use the formula \eqref{eq_sh_com_assotiative} to prove the proposition. Fix a Lie ring $L.$ For an endomorphism $\varphi \in {\sf End}(L)$ and $a\in L$ we set $a.\varphi=\varphi(a).$ For any two endomorphisms $\varphi,\psi \in {\sf End}(L)$ we write $\varphi * \psi =\psi \circ \varphi.$ Then $(a.\varphi) . \psi=a.(\varphi * \psi).$  The commutator of  $\varphi$ and $\psi$ with respect to $*$ is denoted by $[\varphi,\psi]_*=\varphi*\psi - \psi*\varphi.$ We consider ${\sf End}(L)$ as a ring with the operation $*$ which is opposite to the composition. Consider the map ${\sf ad}':L \to {\sf End}(L)$ given by ${\sf ad}'(a)(b)=[b,a].$ Then ${\sf ad}'$ is a homomorphism of Lie algebras i.e. ${\sf ad}'([a,b])=[{\sf ad}'(a),{\sf ad}'(b)]_*.$ Let $a_1,\dots,a_n$ be elements of $L.$ Set ${\bar a}_i={\sf ad}'(a_i).$ The equation \eqref{eq_sh_com_assotiative} implies that 
$$[{\bar a}_1,\dots, {\bar a}_n]_*=\sum_{i=0}^{n-1} \ \sum_{(\alpha,\beta)} (-1)^i {\bar a}_{\beta(i)}*{\dots} *{\bar a}_{\beta(1)}*{\bar a}_{\alpha(1)}*{\dots} *{\bar a}_{\alpha(n-i)},$$
where the second sum is taken  $(\alpha,\beta)\in {\sf Sh}^1(n-i,i).$ If we apply $a.-$ to both parts of the equality, we obtain the required formula. 
\end{proof}
\begin{proof}[Proof of Lemma \ref{lemma_bracket_sum}]
Proposition \ref{Proposition_bracket_shaffles} implies that $[[x_1,\dots,x_k],[x_{k+1},\dots,x_{k+l}]]$ equals to the sum of elements 
\begin{equation}
\begin{split}
&(-1)^i[x_1,\dots,x_k,x_{k+\beta(i)},\dots,x_{k+\beta(1)},x_{k+\alpha(1)},\dots,x_{k+\beta(n-i)}]= \\
& (-1)^i[x_{\tilde \sigma_{\alpha,\beta,k,l}(1)}, \dots ,x_{\tilde \sigma_{\alpha,\beta,k,l}(k+l)}]=
\\
&  [x_{ \sigma_{\alpha,\beta,k,l}(1)}, \dots ,x_{ \sigma_{\alpha,\beta,k,l}(k+l)}].
\end{split}
\end{equation}
The assertion follows. 
\end{proof}
\begin{proof}[Proof of Lemma \ref{Lemma_do_not_intersect}] Consider cases.

Let $k\geq 2$ and $l\geq 1.$ Then for any $\sigma\in C_{k,l}$ we have $\sigma(\{1,2\})=\{1,2\}$ and for any $\tau\in \Phi_{k,l} C_{l,k}$ we have $k+1 \in \tau(\{1,2\}).$ Then $C_{k,l}\cap \Phi_{k,l}C_{l,k}=\emptyset.$ 

Let $k=1$ and $l\geq 2.$ Then $C_{l,k}=C_{l,1}=\{()\},$ and hence $\Phi_{1,l}C_{l,1}=\{\Phi_{1,l}\}$. For any $\sigma\in C_{1,l}$ we have $1\in \sigma(\{1,2\})$ but $\Phi_{1,l}(\{1,2\})=\{2,3\}.$ Then $C_{1,l}\cap \Phi_{1,l}C_{l,1}=\emptyset.$ 

Let $k=1$ and $l=1.$ Then $C_{1,1}=\{()\}$ and $\Phi_{1,1}C_{1,1}=\{(1,2)\}.$
\end{proof}

\begin{proof}[Proof of Theorem \ref{Theorem1}]
Since $C_{k,l}$ and $\Phi_{k,l} C_{l,k}$ are disjoint, we get  $\beta_n({\sf Sum}(T_{k,l}))$ $=$ $\beta_n({\sf Sum}(C_{k,l}))$ $+$ $\beta_n({\sf Sum}(\Phi_{k,l}C_{l,k}))$ $=$ $[[x_1,\dots,x_k] , [x_{k+1},\dots, x_{k+l}]]$ $+$ $[[x_{k+1},\dots , x_{k+l}] , [x_1,\dots,x_k]]$ $=$ $0.$
\end{proof}

\begin{proof}[Proof of Theorem \ref{Theorem2}]
It is proved in \cite{Blessenohl-Laue} (see also \cite[p. 211]{Reutenauer}) that the set $\{\sigma \theta_{j,n} \mid \sigma(j)=1, 2\leq j \leq n  \}$ is a basis of ${\rm Ker}(\beta_n)$, where 
$$\theta_{j,n}=x_1\dots x_n + x_j[x_1,\dots,x_{j-1}]x_{j+1}\dots x_n.$$
Prove that $\theta_{j,n}={\sf Sum}(T_{j-1,1,n}).$ Since $\theta_{j,n}$ is the image of $\theta_{j,j},$ it is sufficient to prove it for $j=n.$  Note that ${\sf Sum}(T_{n-1,1,n})={\sf Sum}(C_{n-1,1}) +{\sf Sum}(\Phi_{n-1,1}C_{1,n-1}) $ and $C_{n-1,1}=\{()\}.$ It follows that 
\begin{align*}
&{\sf Sum}\left(T_{n-1, 1, n}\right) = x_1 x_2 \dots x_{n} + \sum_{i=0}^{n-2}\sum_{(\alpha, \beta)}(-1)^i x_n x_{\beta(i)} \dots x_{\beta(1)} x_{\alpha(1)}  \dots x_{\alpha(n-1-i)} =\\
&\ \hspace{-0.5cm}= x_1 x_2 \dots x_{j} + x_n\sum_{i=0}^{n-2}\sum_{(\alpha, \beta)}(-1)^i x_{\beta(i)} \dots x_{\beta(1)} x_{\alpha(1)}  \dots x_{\alpha(n-1-i)} =\theta_{n,n}.
\end{align*}
where the second sum is taken over all shuffles $(\alpha,\beta) \in {\sf Sh}^1(n-1-i,i)$.
\end{proof}

\begin{proof}[Proof of Proposition \ref{Proposition_2-Jacobi}]
Set 
$$\gamma_n^{2}=\gamma_n\otimes \mathbb Z/2, \hspace{1cm}  \beta_n^{2}=\beta_n\otimes {\sf id}_{\mathbb Z/2}, \hspace{1cm} {\sf Sum}^{2}(T)={\sf Sum}(T)\otimes 1 \in \gamma_n^2$$ for any $T\subseteq S_n.$ Similarly to Lemma \ref{Lemma_equivalent_Jacobi} one can prove that a subset $T\subseteq S_n$ is 2-Jacobi if and only if $\beta^2_n({\sf Sum}^2(T))=0.$ The set of subsets $\mathcal P(S_n)$ of the symmetric group $S_n$ is a vector space over $\mathbb Z/2$ with respect to the symmetric difference. Moreover, it is easy to see that the map ${\sf Sum}^2:\mathcal P(S_n) \to \gamma^2_n$ is an isomorphism of vector spaces over $\mathbb Z/2$.  Hence the set of 2-Jacobi subsets is the kernel of the homomorphism $\beta^2_n\circ {\sf Sum}^2.$ It follows that 2-Jacobi subsets are closed under the symmetric difference. Moreover, ${\sf Sum}^2$ induces an isomorphism between the vector space of 2-Jacobi subsets and ${\rm Ker}(\beta^2_n).$ Theorem \ref{Theorem2} implies that $\{{\sf Sum}^2(\sigma T_{k,1,n}) \mid \sigma\in S_n, \  \sigma(k+1)=1,\  1\leq k \leq n-1 \}$ is a basis of $\gamma^2.$ Thus $\{\sigma T_{k,1,n} \mid \sigma\in S_n, \  \sigma(k+1)=1,\  1\leq k \leq n-1 \}$ is a basis of the vector space of 2-Jacobi subsets. 
\end{proof}

\end{document}